\theoremstyle{plain}
\newtheorem{Thm}{Theorem}
\newtheorem{Lem}[Thm]{Lemma}
\newtheorem{Def}[Thm]{Definition}
\newtheorem{Rem}[Thm]{Remark}
\newcommand{\hgt}{\operatorname{ht}}
\newcommand{\bfz}{{\mathbb{Z}}}
\newcommand{\bfq}{{\mathbb{Q}}}
\newcommand{\OB}{\mathcal{OB}}
\def\a{\alpha}
\def\b{\beta}
\def\d{\delta}
\def\g{\gamma}
\def\s{\sigma}
\begin{document}

\title{Symplectic fillings of lens spaces as Lefschetz fibrations}

\author{Mohan Bhupal and Burak Ozbagci}


\address{Department of Mathematics,  METU, Ankara, Turkey, \newline bhupal@metu.edu.tr}

\address{Department of Mathematics, Ko\c{c} University, Istanbul, Turkey
\newline bozbagci@ku.edu.tr}


\begin{abstract}

We construct a positive allowable Lefschetz fibration over the disk
on any minimal (weak) symplectic filling of the canonical contact structure
on a lens space. Using this construction we prove that any minimal
symplectic filling of the canonical contact structure on a lens
space is obtained by a sequence of rational blowdowns from the
minimal resolution of the corresponding complex two-dimensional
cyclic quotient singularity.

\end{abstract}

\maketitle

\section{Introduction}

The link of an isolated complex surface singularity carries a canonical---also known as
Milnor fillable---contact structure which is unique up to isomorphism \cite{cnp}.  A
Milnor fillable contact structure is Stein fillable since a regular neighborhood of the
exceptional divisor in a minimal resolution of the surface singularity provides a
holomorphic filling which can be deformed to be Stein without changing the contact
structure on the boundary \cite{bd}. In particular, a singularity link with its
canonical contact structure always admits a symplectic filling given by the
minimal resolution of the singularity.

The canonical contact structure on a lens space (the oriented link
of a complex two-dimensional cyclic quotient singularity) is well
understood as the quotient of the standard tight contact structure
on $S^3$. The finitely many diffeomorphism types of the minimal
symplectic fillings of the canonical contact structure on a lens
space were classified by Lisca \cite{l} (see also work of the first
author and K. Ono \cite{bono}).

In this paper, we give an algorithm to present each minimal
symplectic filling of the canonical contact structure on a lens
space as an explicit genus-zero PALF (positive allowable Lefschetz
fibration) over the disk. The existence of such a genus-zero PALF
also follows from \cite[Theorem 1]{w} although we do not rely on
that result in this paper.

Using our construction we prove that any minimal symplectic filling
of the canonical contact structure on a lens space is obtained by a
sequence of rational blowdowns (cf. \cite{fs}, \cite{p}) along
\emph{linear} plumbing graphs starting from the minimal resolution
of the corresponding cyclic quotient singularity. As a corollary, we
show that the canonical contact structure on a lens space admits a
unique minimal symplectic filling---represented by the Stein
structure via the PALF we construct on the minimal resolution---up
to symplectic rational blowdown and symplectic deformation
equivalence.

We refer the reader to \cite{gs} and \cite{ozst} for background
material on Lefschetz fibrations, open books and contact structures.
We denote a right-handed Dehn twist along a curve $\g$ as $\g$ again
and we use functional notation while writing products of Dehn
twists.

\section{Symplectic fillings as Lefschetz fibrations}\label{lef}

For integers $1 \leq q < p$, with $(p,q)=1$, recall that the
Hirzebruch-Jung continued fraction is given by
$$ \frac {p}{q} = [a_1,a_2,\ldots,a_l] = a_1-
\cfrac{1}{a_2- \cfrac{1}{\ddots- \cfrac{1}{a_l}}}, \qquad a_i\geq 2
\text{ for all $1\leq i\leq l$}. $$ The lens space $L(p,q)$ is
orientation preserving diffeomorphic to the link of the cyclic
quotient singularity whose minimal resolution is given by a linear
plumbing graph with vertices having weights $-a_1,-a_2,\ldots,
-a_l$, where $p/q=[a_1, \ldots, a_l]$.

It is known that any tight contact structure on $L(p,q)$, in
particular the canonical contact structure $\xi_{can}$,   is
supported by a planar open book \cite{sc}. According to Wendl
\cite{w}, if a contact $3$-manifold $(Y, \xi)$ is supported by a
planar open book $\OB_\xi$, then any strong symplectic filling of
$(Y, \xi)$ is symplectic deformation equivalent to a blow-up of a
PALF whose boundary is $\OB_\xi$. On the other hand, it is also
known that every weak symplectic filling of a rational homology
sphere can be modified into a strong symplectic filling \cite{oo}.
We conclude that any minimal symplectic  filling of $(L(p,q),
\xi_{can})$ admits a genus-zero PALF over $D^2$. In this section we
give an algorithm to describe any minimal symplectic filling of
$(L(p,q), \xi_{can})$ as an \emph{explicit} genus-zero PALF over
$D^2$.


\subsection{Lisca's classification of the fillings}
We first briefly review Lisca's classification \cite{l} of
symplectic fillings of $(L(p,q), \xi_{can})$, up to diffeomorphism.
Let
$$\frac{p}{p-q}=[b_1, \ldots, b_k],$$ where $b_i \geq 2$ for $1 \leq i \leq
k$.  A $k$-tuple of nonnegative integers $(n_1, \ldots, n_k)$ is
called admissible if each of the denominators in the continued
fraction $[n_1, \ldots, n_k]$ is positive. It is easy to see that an
admissible $k$-tuple of nonnegative integers is either $(0)$ or
consists only of positive integers. Let $\mathcal{Z}_{k} \subset
\mathbb{Z}^k$ denote the set of admissible $k$-tuples of nonnegative
integers $\textbf{n}=(n_1, \ldots, n_k)$ such that $[n_1, \ldots,
n_k] =0$, and let $$\mathcal{Z}_{k}(\textstyle{\frac{p}{p-q}}) = \{ (n_1,
\ldots, n_k)\in\mathcal{Z}_{k}\,|\, 0 \leq n_i \leq b_i \;
\mbox{for} \; i=1, \ldots , k\}.$$ Note that any $k$-tuple of
positive integers in $\mathcal{Z}_{k}$ can be obtained from $(1,1)$
by a sequence of strict blowups.

{\begin{Def} \label{blowup} A strict blowup of an $r$-tuple of
integers at the $j$th term is a map $\psi_j: \mathbb{Z}^r \to
\mathbb{Z}^{r+1}$ defined by
\begin{align*}
& (n_1, \ldots, n_{j}, n_{j+1}, \ldots, n_r) \mapsto  (n_1, \ldots,
n_{j-1}, n_{j}+1,1,n_{j+1}+1, n_{j+2}, \ldots,
n_r)
\end{align*}
for any $1 \leq j \leq r-1$ and by
\begin{align*} & (n_1, \ldots,
n_r) \mapsto  (n_1, \ldots, n_{r-1},
n_r+1,1)
\end{align*}
when $j=r$. The left inverse of a strict blowup at the $j$th term is called a strict blowdown at the $(j+1)$st term.
\end{Def}

Consider the chain of $k$ unknots in $S^3$ with framings $n_1, n_2,
\ldots, n_k$, respectively. For any $\textbf{n}=(n_1, \ldots, n_k)
\in \mathcal{Z}_{k}$, let $N(\textbf{n})$ denote the result of Dehn
surgery on this framed link. It is easy to see that $N(\textbf{n})$
is diffeomorphic to $S^1 \times S^2$. Let
$\textbf{L}=\bigcup_{i=1}^{k} L_i$ denote the framed link in
$N(\textbf{n})$, shown in Figure~\ref{han} in the complement of the
chain of $k$ unknots, where $L_i$ has $b_i -n_i$ components.

\begin{figure}[ht]
  \relabelbox \small {
  \centerline{\epsfbox{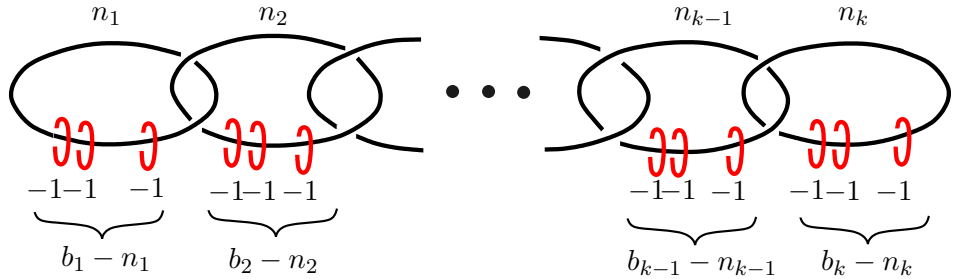}}}

\relabel{a}{$n_1$}

\relabel{b}{$n_2$}

\relabel{d}{$n_{k-1}$}

\relabel{e}{$n_k$}

\relabel{1}{$b_1-n_1$}

\relabel{2}{$b_2-n_2$}

\relabel{3}{$b_{k-1}-n_{k-1}$}

\relabel{4}{$b_k-n_k$}

\relabel{5}{$-1$}

\relabel{6}{$-1$}

\relabel{7}{$-1$}

\relabel{8}{$-1$}

\relabel{9}{$-1$}

\relabel{10}{$-1$}

\relabel{a1}{$-1$}

\relabel{a2}{$-1$}

\relabel{a3}{$-1$}

\relabel{a4}{$-1$}

\relabel{a5}{$-1$}

\relabel{a6}{$-1$}

\endrelabelbox
       \caption{Lisca's description of the filling $W_{(p,q)}(\textbf{n})$}

        \label{han}
\end{figure}

The $4$-manifold $W_{p,q}(\textbf{n})$ with boundary $L(p,q)$ is obtained by attaching
$2$-handles to $S^1 \times D^3$ along the framed link $\varphi(\textbf{L}) \subset S^1
\times S^2$ for some diffeomorphism $\varphi : N(\textbf{n})\to S^1 \times S^2$.  Note
that this description is a \emph{relative }handlebody decomposition of
 $W_{p,q}(\textbf{n})$ and it is independent of the choice of $\varphi$ since any self-diffeomorphism
of $S^1 \times S^2$ extends to $S^1 \times D^3$. According to Lisca,
any symplectic filling of ($L(p,q), \xi_{can})$ is
orientation-preserving diffeomorphic to a blowup of
$W_{p,q}(\textbf{n})$ for some $\textbf{n} \in
\mathcal{Z}_{k}(\frac{p}{p-q})$.

{\Rem\label{mo} In particular, for $p\neq4$, $(L(p,1), \xi_{can})$
has a unique minimal symplectic filling and, for $p \geq 2$,
$(L(p^2,p-1), \xi_{can})$ has two distinct minimal symplectic
fillings, up to diffeomorphism.}

\subsection{Another description of the fillings}\label{another}

Here we give another description of  $W_{p,q}(\textbf{n})$ which will lead to a
construction of a genus-zero PALF on this $4$-manifold with boundary.  First we slide
the unknot with framing $n_{k-1}$ over the unknot with framing $n_{k}$ and denote the
framing of the new unknot as $n'_{k-1}$. Next we slide the unknot with framing
$n_{k-2}$ over the unknot with framing $n'_{k-1}$ and proceed inductively until we
slide the unknot with framing $n_1$ over the one with framing $n'_2$ and let $n'_1$
denote its new framing.  By setting $n'_k=n_k$, the new framings of the surgery curves
are given by $n'_1, n'_2, \ldots, n'_k$, all of which can be computed inductively by
the standard formula for a handle-slide:
$$n'_i= n_i+n'_{i+1}  -2$$
for $1 \leq i \leq k-1$. Notice that these handle-slides are performed in the
complement of the link $\textbf{L}$ in Figure~\ref{han} and the result of Dehn surgery
on the new framed link is also diffeomorphic to $S^1 \times S^2$.

Moreover, this new surgery link can be viewed as the closure of a
braid in $S^3$. We order the strands of this braid using the
sub-indices of their associated framings. To visualize this braid,
imagine a trivial braid with $k$-strands, wrap the $k$th strand
$n'_k-1$ times around the first $k-1$ strands and then wrap the
strand indexed by $k-1$ around the first $k-2$ strands $n'_{k-1} -1
$ times and proceed inductively. See Figure~\ref{wrap} for an
illustration of ``wrapping around". To be more precise this braid is
given by

$$\prod_{j=2}^{j=k} (\s^{-1}_{j-1} \cdots \s^{-1}_1 \s^{-1}_1
\cdots \s^{-1}_{j-1})^{n'_j -1} $$ where $\s_1, \ldots, \s_{k-1}$
are the standard generators in the braid group with $k$ strands.

\begin{figure}[ht]
  \relabelbox \small {
  \centerline{\epsfbox{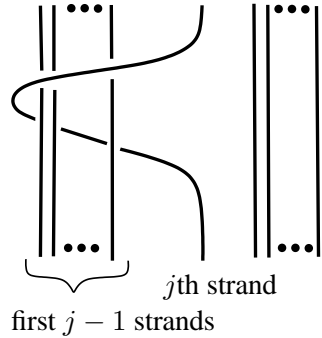}}}

\relabel{a}{first $j-1$ strands}

\relabel{b}{$j$th strand}

\endrelabelbox
       \caption{The $j$th strand wraps around the first $j-1$ strands once}
        \label{wrap}
\end{figure}

Each component $L_i$ of $\textbf{L}$ can now be viewed as an unknot linking the
first $i$ strands of this braid. As a result we get another relative handlebody
description of the $4$-manifold $W_{p,q}(\textbf{n})$, where the chain of unknots with
framings $n_1, \ldots, n_k$ in Lisca's description is replaced by unknots with framings
$n'_1, \ldots, n'_k$ braided as described above and the link $\textbf{L}$ plays the
same role in both descriptions.

\subsection{Open book decompositions of $S^1 \times
S^2$}\label{openbook}

We consider the open book decomposition compatible with the unique
tight contact structure on $S^1 \times S^2$ whose page is an annulus
and whose monodromy is the identity. We associate this open book to
the $1$-tuple $(0)\in \mathcal{Z}_{1}$. If $k>1$,
we stabilize this open book
once so that the new page is a disk with two holes and the new
monodromy is a right-handed Dehn twist around one of the holes. The
holes in the disk are ordered linearly from \emph{left} to
\emph{right} and the Dehn twist is around the second hole as shown
in Figure~\ref{stabil1}(a).

Depending on a blowup sequence from $(1,1)$ to $(n_1, \ldots, n_k)$,
we inductively stabilize this open book $k-2$ times as follows:
For the initial step corresponding to the blowup $(1,1) \to (2,1,2)$
we just split the second hole in Figure~\ref{stabil1}(a) into two
holes, so that both holes lie in the interior of the Dehn twist.
Then we relabel the holes as $1,2,3$ linearly from left to right and
add a stabilizing right-handed Dehn twist which encircles the holes
labelled as $1$ and $3$ as depicted in Figure~\ref{stabil1}(b). This
is certainly a positive stabilization, as one can attach a
$1$-handle in the interior of the second hole in
Figure~\ref{stabil1}(a), and let the stabilizing curve go over this
$1$-handle.

\begin{figure}[ht]
  \relabelbox \small {
  \centerline{\epsfbox{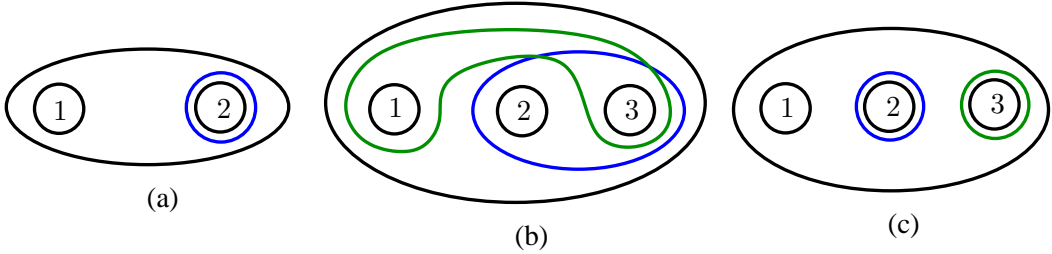}}}

\relabel{1}{$1$}

\relabel{2}{$2$}

\relabel{3}{$1$}

\relabel{4}{$2$}

\relabel{5}{$3$}

\relabel{6}{$1$}

\relabel{7}{$2$}

\relabel{8}{$3$}

\relabel{a}{(a)}

\relabel{b}{(b)}

\relabel{c}{(c)}

\endrelabelbox
       \caption{Positive stabilizations}
        \label{stabil1}
\end{figure}

Corresponding to the alternative blowup $(1,1) \to (1,2,1)$, we just insert a third
hole to the \emph{right} of the second hole so that this hole is \emph{not
included}---as opposed to the previous case---in the Dehn twist which already exists in
the initial open book. Then we add a stabilizing right-handed Dehn twist around this
new hole as shown in Figure~\ref{stabil1}(c).

Suppose that the page of the open book, corresponding to the result
of $r-2$ consecutive blowups starting from $(1,1)$,  is a disk
$D_r$  with $r$ holes (for $3 \leq r \leq k-1$) so that the
monodromy is the product of $r-1$ right-handed Dehn twists
$$x_1\cdots x_{r-1}.$$
Assume that the holes are ordered linearly from
\emph{left} to \emph{right} on the disk. If the next blowup occurs
at the $j$th term, for $1 \leq j \leq r-1$, then we insert a new
hole between the $j$th and $(j+1)$st holes (imagine splitting the
$(j+1)$st hole into two) and relabel the holes linearly from left to
right as $1,2, \ldots, r+1$. Let  $D_{r+1}$ denote the new disk with
$r+1$ holes and let $\widetilde{x}_i$ denote the right-handed Dehn
twist on $D_{r+1}$ induced from $x_i$. This means that if $x_i$
encircles the $(j+1)$st hole in $D_{r}$, then  $\widetilde{x}_i$
encircles the same holes as $x_i$ plus the new hole inserted to
obtain $D_{r+1}$, otherwise $x_i$ and $\widetilde{x}_i$ encircle the same holes.
To complete the stabilization,  we add a
right-handed Dehn twist along a curve $\b_j$ encircling the holes
labelled as $1,2,\ldots,j, j+2$, skipping the \emph{new hole} now
labelled as $j+1$ in $D_{r+1}$. As a result the monodromy of the new
open book is given by the product $$\widetilde{x}_1 \cdots
\widetilde{x}_{r-1} \b_j.$$

If, on the other hand, the next blowup occurs at the $r$th term, we
insert an $(r+1)$st hole to the right and add a stabilizing
right-handed Dehn twist $\a_{r+1}$ around this new hole labelled by
$r+1$. In this case, it is clear how to lift the Dehn twist $x_i$ in
$D_r$ to $\widetilde{x}_i$ in $D_{r+1}$ and the resulting monodromy
is $$\widetilde{x}_1 \cdots \widetilde{x}_{r-1} \a_{r+1}.$$

The page of the resulting open book decomposition of $S^1 \times
S^2$ corresponding to a strict blowup sequence from $(1,1)$ to the
positive  $k$-tuple $\textbf{n}=(n_1, \ldots, n_k)$ is a disk $D_k$ with
$k$ holes and the monodromy is given as the product of $k-1$
right-handed Dehn twists (ordered by the induction) along the
inserted stabilizing curves at each blowup. Note that if we think of the holes
in $D_k$ as being arranged counterclockwise in an annular neighbourhood
of the boundary, then each of the Dehn twists we consider is a \emph{convex}
Dehn twist.

The open book decomposition we have just constructed leads to yet another surgery
description of $S^1 \times S^2$. Take the closure of a trivial braid
with $k$ strands each of which has $0$-framing and insert
$(-1)$-framed surgery curves (ordered from top to bottom)
corresponding to the stabilizing curves linking this braid according
to the algorithm given above. By blowing down all the $(-1)$-surgery
curves we get  a \emph{framed braid} with $k$ strands whose closure
represents $S^1 \times S^2$.

\subsection{Equivalence of the two framed braids}\label{eq}

We claim that the framed braid with $k$ strands obtained by blowing down all the
$(-1)$-surgery curves in Section~\ref{openbook}, is exactly the same as the framed
braid obtained in Section~\ref{another} by handle-slides on the given chain of $k$
unknots. Our aim in this section is to prove this claim by induction.

First of all, we show that the framings of each strand with the same
index are equal in both braids. Let us use the notation $(n_1,
\ldots, n_r)'=(n'_1, \ldots n'_r)$ to denote the new framings of the
surgery curves after performing the handle-slides in
Section~\ref{another}. Then one can verify that the effect of a
blowup of an $r$-tuple at the $j$th term, for $1 \leq j \leq r-1$ is
given by
\begin{multline*}
 (n_1, \ldots, n_{j-1}, n_{j}+1,1,n_{j+1}+1, n_{j+2}, \ldots,
n_r)'\\
 =(n'_1+1, \ldots, n'_{j-1}+1, n'_{j}+1, n'_{j+1}, n'_{j+1}+1,
n'_{j+2}, \ldots,  n'_r ).
\end{multline*}

On the other hand,  for the induction step in the framed surgery
presentation described in Section~\ref{openbook}, we insert a zero
framed new strand between the $j$th and the $(j+1)$st strand and relabel the
strands linearly from left to right so that the new strand has index
$j+1$. We also insert a new $(-1)$-surgery curve linking the strands
$1,2,\ldots,j,j+2$ avoiding the new $(j+1)$st strand.
The induction hypothesis implies that by blowing down all the
$(-1)$-curves except the new one, the framings of the strands are
given by
$$(n'_1, \ldots,
n'_{j}, n'_{j+1}, n'_{j+1}, n'_{j+2}, \ldots,  n'_r).$$

We simply observe that blowing down the last inserted $(-1)$-surgery curve adds $1$ to
the new framing of each of the strands indexed by $1,2,\ldots,j,j+2$  which
is consistent with the blowup formula above.

Next we show that the two braids are in fact equivalent in the
complement of $\textbf{L}$. Suppose that the braids are equivalent
before we perform a blowup at the $j$th term. In the induction step
we insert a new strand between the $j$th and $(j+1)$st strand, which is
a parallel copy the $(j+1)$st strand in the braid described in
Section~\ref{another}. The induction hypothesis implies that by
blowing down all $(-1)$-curves except the new one, with the new
indexing, the $(j+1)$st strand links the $(j+2)$nd strand $n'_{j+1}$
times. They both wrap around the strands to the left of them
$n'_{j+1}-1$ times. The effect of blowing down the last inserted
$(-1)$-curve linking the strands $1,2,\ldots,j,j+2$ avoiding
the new strand (now indexed with $j+1$) is illustrated on the left
in Figure~\ref{equalbraid}, where the new strand is represented by
the thin curve.


\begin{figure}[ht]
  \relabelbox \small {
  \centerline{\epsfbox{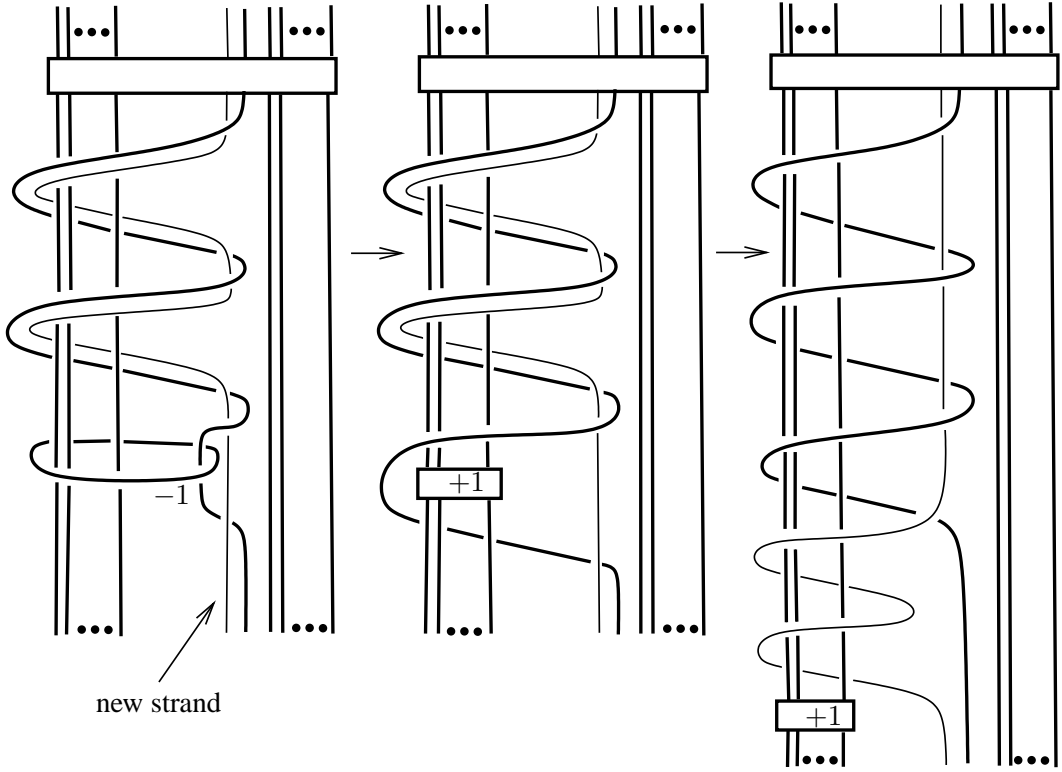}}}

\relabel{1}{$+1$}

\relabel{2}{$+1$}

\relabel{e}{$-1$}

\relabel{n}{new strand}

\endrelabelbox
       \caption{Blowing down the $(-1)$-curve}
        \label{equalbraid}
\end{figure}

By blowing down the last $(-1)$-curve, the strands
$1,2,\ldots,j,j+2$ will have a full right twist as shown in the middle in
Figure~\ref{equalbraid}. When we pull the ``spring" in the thin
curve down, it becomes clear how this $(j+1)$st strand wraps around
the strands to the left of it $n'_{j+1}-1$ times as depicted on the
right in Figure~\ref{equalbraid}. In this new braid the number of
times any strand wraps around the strands to the left of it is
consistent with the blowup formula given above. In particular, the
$(j+2)$nd strand wraps around the strands to the left of it
$n'_{j+1}$ times.

To verify our claim for the case of a blowup of an $r$-tuple at the
$j$th term for $j=r$ is much easier and it is left to the reader.

\subsection{Genus-zero PALF on the fillings}

The open book decomposition of  $S^1 \times S^2$ described in
Section~\ref{openbook}, corresponding to any sequence of strict
blowups from $(0)$ to a $k$-tuple $\textbf{n} \in
\mathcal{Z}_{k}(\frac{p}{p-q})$, is compatible with the unique tight
contact structure on $S^1 \times S^2$. The genus-zero PALF over
$D^2$ whose boundary is given by this open book is diffeomorphic to
$S^1 \times D^3$ since the tight contact $S^1 \times S^2$ has a
unique Stein filling up to diffeomorphism. A handlebody
decomposition of this PALF on $S^1 \times D^3$ can be obtained from
the closure of the framed braid in Section~\ref{openbook} by
converting the $0$-framed surgery curves---the strands of the
braid---to dotted circles representing $1$-handles, where  each
$(-1)$-surgery curve linking the strands of this braid represents a
vanishing cycle.

Inserting the link $\textbf{L}$ into this diagram completes the
handlebody decomposition of the desired PALF on
$W_{p,q}(\textbf{n})$, since each component of $\textbf{L}$ also
represents a vanishing cycle. This is because each component of
$\textbf{L}$ can be Legendrian realized on the planar page of the
open book of $S^1 \times S^2$.

As a consequence, the resulting contact structure on $L(p,q)$ is
obtained by Legendrian surgery from the standard tight contact $S^1
\times S^2$. The ordered vanishing cycles of this PALF on
$W_{p,q}(\textbf{n})$ can be explicitly described on a disk with $k$
holes by the algorithm given in Section~\ref{openbook}, where we add
a Dehn twist corresponding to each component of $\textbf{L}$ at the
end.  Summarizing we obtain

\begin{Thm} \label{pal} There is an algorithm to present any minimal symplectic filling of
the canonical contact structure on a lens space as an explicit
genus-zero PALF over the disk.
\end{Thm}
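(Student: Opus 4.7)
The plan is to combine the ingredients assembled in Sections~\ref{another}--\ref{eq} into a single algorithmic recipe. By Lisca's classification, any minimal symplectic filling of $(L(p,q),\xi_{can})$ is orientation-preserving diffeomorphic to $W_{p,q}(\textbf{n})$ for some $\textbf{n}\in\mathcal{Z}_k(\frac{p}{p-q})$, so it suffices to exhibit an explicit genus-zero PALF structure on each such $W_{p,q}(\textbf{n})$ and point out that the sequence of stabilizations to be carried out on a base open book is driven by a strict blowup sequence from $(1,1)$ (or $(0)$) to $\textbf{n}$.

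The first step is to fix such a blowup sequence and run the inductive procedure of Section~\ref{openbook}. This produces an open book decomposition of $S^1\times S^2$ with planar page $D_k$ and monodromy an explicit product of $k-1$ right-handed convex Dehn twists, one introduced at each blowup. Since the tight contact $S^1\times S^2$ has a unique Stein filling up to diffeomorphism, the genus-zero PALF over $D^2$ whose boundary open book is the one just built is diffeomorphic to $S^1\times D^3$; I would read off a relative handlebody picture by taking the framed braid produced alongside the open book in Section~\ref{openbook}, converting each $0$-framed strand into a dotted circle and each stabilizing $(-1)$-surgery curve into a $2$-handle vanishing cycle.

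The next step is to append the link $\textbf{L}$ of Figure~\ref{han} to the diagram. Each component of $\textbf{L}$ is an unknot linking an initial segment of strands of the braid, hence it can be Legendrian realized on the planar page $D_k$; each such component therefore adds a further right-handed Dehn twist vanishing cycle to the PALF. The braid equivalence established in Section~\ref{eq} shows that, in the complement of $\textbf{L}$, the framed braid obtained after blowing down all the $(-1)$-curves in Section~\ref{openbook} coincides with the framed braid produced from Lisca's chain of unknots by the handle-slides of Section~\ref{another}. Consequently the augmented diagram describes the same $4$-manifold as Lisca's, namely $W_{p,q}(\textbf{n})$.

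Assembling these steps gives the algorithm: input a strict blowup sequence from $(1,1)$ to $\textbf{n}\in\mathcal{Z}_k(\frac{p}{p-q})$; output the page $D_k$ together with the ordered product of right-handed Dehn twists consisting of the stabilizing twists prescribed by Section~\ref{openbook} followed by one Dehn twist per component of $\textbf{L}$, yielding a genus-zero PALF whose total space is $W_{p,q}(\textbf{n})$. The main obstacle is precisely the braid-equivalence verification of Section~\ref{eq}, which compares the effect of blowing down the newest stabilizing $(-1)$-curve with the handle-slide blowup formula $n'_i=n_i+n'_{i+1}-2$; once that inductive check is granted, all the remaining steps are diagrammatic.
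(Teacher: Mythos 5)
Your proposal is correct and follows essentially the same route as the paper: stabilize the standard open book of $S^1\times S^2$ according to a strict blowup sequence to $\textbf{n}$, realize the resulting PALF on $S^1\times D^3$ via the framed braid, adjoin a vanishing cycle for each component of $\textbf{L}$ via Legendrian realization on the planar page, and invoke the braid equivalence of Section~\ref{eq} to identify the total space with $W_{p,q}(\textbf{n})$. You also correctly single out the braid-equivalence check as the substantive step carrying the argument.
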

We would like to point out that the PALF in Theorem~\ref{pal} can be
obtained explicitly which therefore leads to an \emph{absolute}
handlebody decomposition of any symplectic filling at hand as
opposed to the relative decomposition depicted in Figure~\ref{han}.

\subsection{An example}
In the following we illustrate our algorithm to construct a
genus-zero PALF on the symplectic filling $W_{(81,47)}(\textbf{n})$
of the canonical contact structure on $L(81,47)$, where
$\textbf{n}=(3,2,1,3,2)$. Note that $\frac{81}{47}=[2,4,3,3,2]$ and
$\frac{81}{81-47}=[3,2,3,3,3]$.

According to Lisca's classification, $W_{(81,47)}(\textbf{n})$
represents one of the six distinct diffeomorphism classes of minimal
symplectic fillings of the canonical contact structure on
$L(81,47)$. The link $\textbf{L}$ in Lisca's description of the
filling in question has three components in total, two of which are
linking the third and one linking the fifth unknot in the chain
$\textbf{n}$ (see Figure~\ref{han2}).

\label{ex}\begin{figure}[ht!]
  \relabelbox \small {
  \centerline{\epsfbox{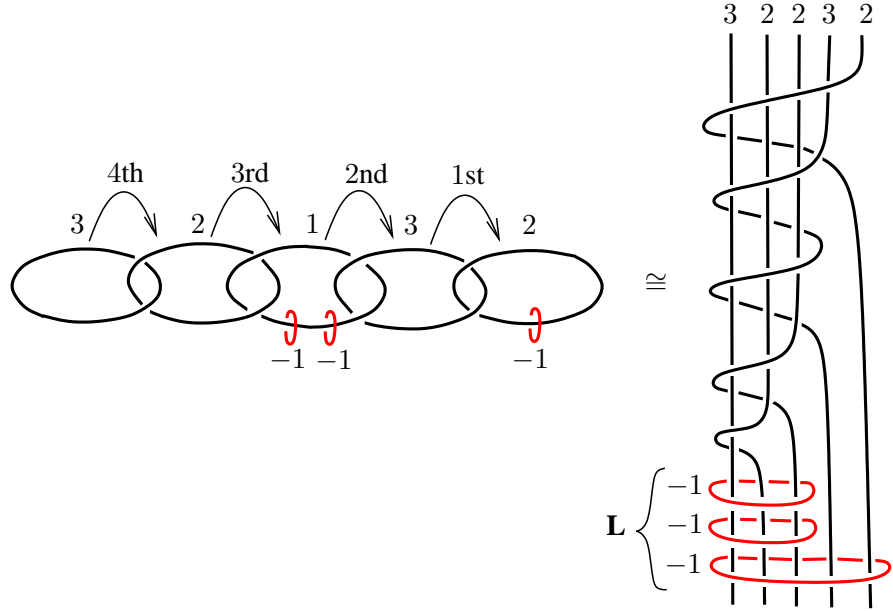}}}

\relabel{1}{$2$}

\relabel{2}{$3$}

\relabel{3}{$2$}

\relabel{4}{$2$}

\relabel{5}{$3$}

\relabel{a1}{$-1$}

\relabel{a2}{$-1$}

\relabel{a3}{$-1$}

\relabel{a}{$3$}

\relabel{b}{$2$}

\relabel{c}{$1$}

\relabel{d}{$3$}

\relabel{e}{$2$}

\relabel{f}{$-1$}

\relabel{g}{$-1$}

\relabel{h}{$-1$}

\relabel{l}{$\textbf{L}$ }

\relabel{y}{$\cong$}

\relabel{r1}{$1$st}

\relabel{r2}{$2$nd}

\relabel{r3}{$3$rd}

\relabel{r4}{$4$th}

\endrelabelbox
       \caption{Handle slides}

        \label{han2}
\end{figure}

First we slide $2$-handles in the chain over each other and obtain a
new surgery diagram as shown on the right in Figure~\ref{han2}. The
new unknots can be drawn as the closure of a braid and their
framings are given by $(n'_1, \ldots, n'_5)=(3,2,2,3,2)$. In
addition, two components of $\textbf{L}$ link the first three
strands, and one component links all the strands of this braid.

\begin{figure}[ht]
  \relabelbox \small {
  \centerline{\epsfbox{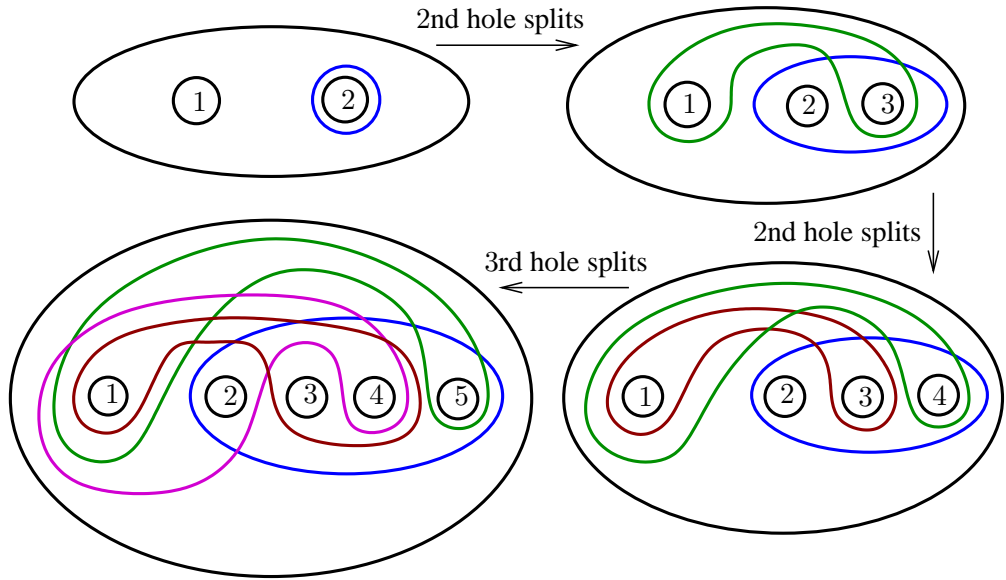}}}

\relabel{1}{$1$}

\relabel{2}{$2$}

\relabel{3}{$1$}

\relabel{4}{$2$}

\relabel{5}{$3$}

\relabel{6}{$1$}

\relabel{7}{$2$}

\relabel{8}{$3$}

\relabel{9}{$4$}

\relabel{a}{$1$}

\relabel{b}{$2$}

\relabel{c}{$3$}

\relabel{d}{$4$}

\relabel{e}{$5$}

\relabel{s1}{$2$nd hole splits}

\relabel{s2}{$2$nd hole splits}

\relabel{s3}{$3$rd hole splits}

\endrelabelbox
       \caption{Positive stabilizations of the standard open book of $S^1 \times S^2$}
        \label{open}
\end{figure}

\begin{figure}[ht]
  \relabelbox \small {
  \centerline{\epsfbox{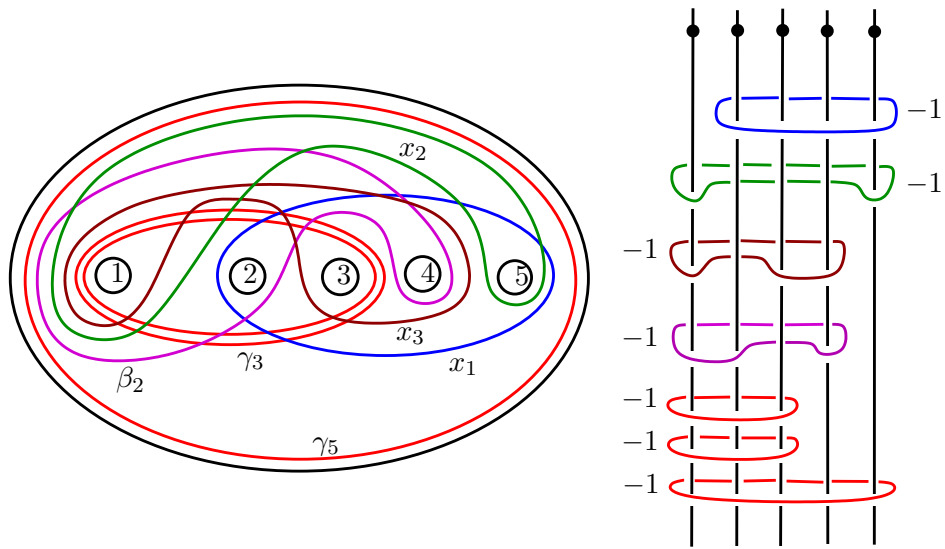}}}

\relabel{a}{$-1$}

\relabel{b}{$-1$}

\relabel{c}{$-1$}

\relabel{d}{$-1$}

\relabel{e}{$-1$}

\relabel{f}{$-1$}

\relabel{g}{$-1$}

\relabel{1}{$1$}

\relabel{2}{$2$}

\relabel{3}{$3$}

\relabel{k}{$4$}

\relabel{l}{$5$}

\relabel{x}{$x_1$}

\relabel{y}{$x_2$}

\relabel{z}{$x_3$}

\relabel{b2}{$\b_2$}

\relabel{g3}{$\g_3$}

\relabel{g5}{$\g_5$}

\endrelabelbox
       \caption{Monodromy $x_1x_2x_3\b_2\g^2_3\g_5$ of the PALF on $W_{(81,47)}((3,2,1,3,2))$ and its handlebody diagram}

        \label{palf}
\end{figure}

On the other hand, positive stabilizations of the standard open book
of $S^1 \times S^2$ corresponding to the blowup sequence $$(1,1) \to
(2,1,2) \to (3,1,2,2) \to (3,2,1,3,2)=\textbf{n}$$ is depicted in
Figure~\ref{open}. The monodromy of our PALF on
$W_{(81,47)}((3,2,1,3,2))$ is given as the product $$x_1x_2x_3
\b_2\g^2_3\g_5$$ of right-handed Dehn twists along the four
stabilizing curves $x_1, x_2, x_3,\b_2$ in the order they appear and
three more right-handed Dehn twists corresponding to the link
$\textbf{L}$ (see Figure~\ref{palf}). Two of these latter ones are
along two disjoint copies of a convex curve $\g_3$ encircling the
first three holes and one is along a convex curve $\g_5$ encircling
all the holes. Moreover, a handle decomposition of
$W_{(81,47)}((3,2,1,3,2))$ including five  $1$-handles, where one
can explicitly see the PALF is shown in Figure~\ref{palf}.


\section{Monodromy substitutions and rational blowdowns}\label{lant}

The lantern relation in the mapping class group of a sphere with
four holes was discovered by Dehn although Johnson named it as the
lantern relation after rediscovering it in \cite{jo}. This relation
and its generalizations have been effectively used recently in
solving some interesting problems in low-dimensional topology. The
key point is that the lantern relation (cf. Figure~\ref{lantern}) holds
in any subsurface of another surface which is homeomorphic to a
sphere with four holes.

\begin{figure}[ht]
  \relabelbox \small {
  \centerline{\epsfbox{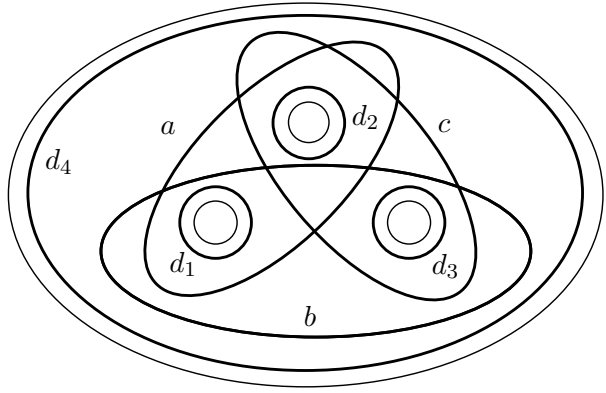}}}

\relabel{1}{$d_1$}

\relabel{2}{$d_2$}

\relabel{3}{$d_3$}

\relabel{4}{$d_4$}

\relabel{a}{$a$} \relabel{b}{$b$} \relabel{c}{$c$}

\endrelabelbox
       \caption{The lantern relation $d_1d_2d_3d_4=abc$
 on a four-holed sphere}
        \label{lantern}
\end{figure}

Suppose that there is a ``piece" in the monodromy factorization of a
(not necessarily positive or allowable) Lefschetz fibration which
appears as the left-hand side of the lantern relation. Deleting that
piece from the monodromy word and inserting the right-hand side is
called a lantern substitution. It was shown in \cite{eng} that the
effect of this substitution in the total space of the fibration is a
rational blowdown operation, which can be easily seen as follows:
The PALF with monodromy $d_1d_2d_3d_4$ is diffeomorphic to the $D^2$
bundle over $S^2$ with Euler number $-4$, while the PALF with
monodromy $abc$ is diffeomorphic to a rational $4$-ball with
boundary $L(4,1)$. Cutting a submanifold diffeomorphic to the
$D^2$-bundle over $S^2$ with Euler number $-4$ from a $4$-manifold
and gluing in a rational $4$-ball was named as a rational blowdown
operation by Fintushel and Stern \cite{fs}.

We would like to point out that the genus-zero PALF with monodromy
$d_1d_2d_3d_4$ and the genus-zero PALF with monodromy $abc$
represent the two distinct diffeomorphism classes of the minimal
symplectic fillings of $(L(4,1), \xi_{can})$.

Since the linear plumbing of $(p-1)$ disk bundles over $S^2$ with
Euler numbers $-(p+2)$, $-2$, $\ldots$, $-2$ has boundary $L(p^2,
p-1)$, which also bounds a rational $4$-ball,  the cut-and-paste
operation described above is defined similarly for this case
\cite{fs}. The corresponding monodromy substitution was discovered
and named as the daisy relation in \cite{emvhm}, which is
essentially obtained by repeated applications of the lantern
substitution. In fact, the PALFs given by the products of
right-handed Dehn twists appearing on the two sides of the daisy
relation represent the two distinct diffeomorphism classes of the
minimal symplectic fillings of $(L(p^2,p-1), \xi_{can})$ for any
$p\geq2$.

A generalization of Fintushel and Stern's rational blowdown
operation was introduced in \cite{p} involving the lens space
$L(p^2, pq-1)$ as the boundary.  The corresponding monodromy
substitution for this rational blowdown can be computed by the
technique introduced in \cite{emvhm}.

A \emph{rational blowdown along a linear plumbing graph} is the
replacement of a neighborhood of a configuration of spheres in a
smooth $4$-manifold which intersect according to a linear plumbing
graph whose boundary is $L(p^2, pq-1)$ by a rational $4$-ball with
the same oriented boundary.

\section{Symplectic fillings and rational blowdowns}

Our goal in this section is to prove our main result.

\begin{Thm} \label{ration} Any minimal symplectic filling of the canonical
contact structure on a lens space is obtained, up to diffeomorphism,
by a sequence of rational blowdowns along linear plumbing graphs
from the minimal resolution of the corresponding complex
two-dimensional cyclic quotient singularity.
\end{Thm}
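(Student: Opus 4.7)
The plan is to use the explicit genus-zero PALFs produced by Theorem~\ref{pal} and pass from the PALF of the minimal resolution to the PALF of an arbitrary filling by a sequence of monodromy substitutions, each of which realizes, by the translation recorded in Section~\ref{lant}, a rational blowdown along a linear plumbing graph.

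First I identify the tuple $\textbf{n}_0 \in \mathcal{Z}_k(p/(p-q))$ corresponding to the minimal resolution. A count of Euler characteristics forces $\sum_i(b_i-n_{0,i})=l+1$, where $p/q=[a_1,\ldots,a_l]$, which is maximal among admissible tuples and hence singles $\textbf{n}_0$ out. The base case $L(4,1)$ already contains the whole idea: the algorithm of Section~\ref{openbook} applied to $\textbf{n}_0=(1,2,1)$ and to $(2,1,2)$ produces PALF monodromies that, up to relabelling, are precisely the two sides $d_1 d_2 d_3 d_4 = abc$ of the lantern relation on the four-holed sphere, so the rational ball filling is obtained from the minimal resolution by a single rational blowdown.

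The inductive step is a descent on the complexity $\chi(W_{p,q}(\textbf{n}_0))-\chi(W_{p,q}(\textbf{n}))=\sum_i(n_i-n_{0,i})$. Given $\textbf{n}\neq\textbf{n}_0$, I produce an admissible tuple $\widetilde{\textbf{n}}$ of strictly smaller complexity such that the PALF monodromy of $W_{p,q}(\widetilde{\textbf{n}})$, as output by the algorithm of Section~\ref{openbook}, contains (after Hurwitz moves) a sub-word matching the left-hand side of a generalized lantern/daisy-type relation in the sense of \cite{emvhm}. Substituting the right-hand side of that relation produces the PALF monodromy of $W_{p,q}(\textbf{n})$, and by Section~\ref{lant} this substitution is precisely a rational blowdown of a linear plumbing of spheres with boundary $L(r^2,rs-1)$ inside $W_{p,q}(\widetilde{\textbf{n}})$. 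Iterating, any $W_{p,q}(\textbf{n})$ is reached from $W_{p,q}(\textbf{n}_0)$ by a finite sequence of rational blowdowns along linear plumbing graphs, which is the content of the theorem.

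The hard part will be the combinatorial step inside the induction: for each $\textbf{n}$ one must exhibit a workable $\widetilde{\textbf{n}}$ together with an explicit occurrence, in the monodromy of $W_{p,q}(\widetilde{\textbf{n}})$, of the left-hand side of a generalized lantern/daisy-type relation, and then check that the substitution yields the monodromy produced by the algorithm of Section~\ref{openbook} when applied to $\textbf{n}$. This requires translating carefully between Lisca's description, the braided description of Section~\ref{another}, and the PALF of Theorem~\ref{pal}, while tracking how the intermediate change of tuple modifies both the blowup sequence underlying the stabilizing Dehn twists and the multiplicities $b_i-n_i$ of the outer Dehn twists $\gamma_i$.
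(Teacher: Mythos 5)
Your high-level architecture matches the paper's: identify the minimal resolution with the minimal-complexity tuple (your $\sum_i(b_i-n_{0,i})$ being maximal is equivalent to the paper's $\hgt(\textbf{n})=0$, i.e.\ $\textbf{n}=\textbf{u}_k=(1,2,\ldots,2,1)$), and then climb to an arbitrary $\textbf{n}$ by monodromy substitutions, each realizing a rational blowdown. The $L(4,1)$ base case is also exactly the paper's motivating example. However, the inductive step as you state it is precisely the content of the theorem and is left unproved: you assert that for each $\textbf{n}$ there is an admissible $\widetilde{\textbf{n}}$ of smaller complexity such that a \emph{single} daisy-type substitution carries the PALF of $W_{p,q}(\widetilde{\textbf{n}})$ to that of $W_{p,q}(\textbf{n})$, but you give no mechanism for producing $\widetilde{\textbf{n}}$, no guarantee that $\widetilde{\textbf{n}}$ lies in $\mathcal{Z}_{k}(\frac{p}{p-q})$ (so that $W_{p,q}(\widetilde{\textbf{n}})$ is even a filling --- the multiplicities $b_i-\widetilde{n}_i$ could go negative), and no argument that the excised sub-PALF is a \emph{linear} plumbing of spheres while the glued-in piece is a rational ball.

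The paper's proof supplies exactly the machinery your "hard part" needs, and it is not a routine verification. It generalizes the fillings to objects $W(\textbf{n},\textbf{m})$ in which $\textbf{m}$ may have negative entries, so that the intermediate stages are \emph{achiral} Lefschetz fibrations rather than fillings; Lemma~\ref{intermed} then produces, by induction on $\hgt(\textbf{n})$, a chain of \emph{elementary} lantern substitutions (plus cancelling pairs of Dehn twists) joining $W(\textbf{u}_k,\textbf{m}_0)$ to $W(\textbf{n},\textbf{m})$ through these achiral objects. The rational blowdowns of the theorem are \emph{not} the individual lantern substitutions: one must group the consecutive substitutions occurring between indices where $\textbf{m}_i$ is nonnegative, and prove --- via the observation that between two such consecutive honest stages the relevant tuple acquires exactly one interior entry equal to $1$, forced by always blowing down the leftmost $1$ --- that each such block replaces a linear plumbing by a rational ball (the rational-ball claim being the homological computation attached to Definition~\ref{wnm}). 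In the paper's own example, three lantern substitutions passing through two non-fillings combine into one rational blowdown of the $[-2,-5,-3]$ plumbing; your single-substitution-per-step descent has no way to see this composite, so the proposal as written does not close the induction.
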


{\Rem \label{sym} According to \cite{gm}, the rational blowdowns in
Theorem~\ref{ration} can be realized as \emph{symplectic} rational
blowdowns.} \\

It will be convenient to make the following definitions for the
proof of Theorem~\ref{ration}.

\begin{Def} \label{height} For a positive $k$-tuple
$\textbf{n}=(n_1,\ldots,n_k)\in\mathcal{Z}_{k}$, we say that $\textbf{n}$ has
\emph{height} $s$, and write
$\hgt(\textbf{n})=s$, if $s$ is the minimal number of strict blowups
required to obtain $\textbf{n}$ from an $l$-tuple of the form
$(1,2,\ldots,2,1)\in \mathbb{Z}^l$, which we will denote by
$\textbf{u}_l$, for $l\geq 2$. We set $\textbf{u}_1=(0)$ and define
$\hgt(\textbf{u}_1)=0$.
\end{Def}

It is easy to check that $$ \hgt(\textbf{n})=|\textbf{n}|-2(k-1), $$
for any  $\textbf{n}=(n_1,\ldots,n_k)\in\mathcal{Z}_{k}$, where
$|\textbf{n}|=n_1+\cdots+n_k$. \\

In addition, we slightly generalize the definition of the
$4$-manifold $W_{p,q}(\textbf{n})$ as follows:

\begin{Def} \label{wnm} For a pair of $k$-tuples
$\textbf{n}=(n_1,\ldots,n_k), \textbf{m}= (m_1,\ldots,m_k)\in
\mathbb{Z}^k$, with $\textbf{n}\in\mathcal{Z}_{k}$, we will denote by
$W(\textbf{n},\textbf m)$ the 4-manifold constructed as in
Section~\ref{lef} from the 3-manifold $N(\textbf{n}) \cong S^1
\times S^2$ and the framed link $\textbf L=\bigcup_{i=1}^k L_i$
associated to $\textbf m$, where $L_i$ consists of $|m_i|$
components as in Figure 1 with the components having framings $-1$
if $m_i>0$ and framings $+1$ if $m_i<0$.
\end{Def}

Note that if each $m_i\geq 0$ and $b_i:=n_i+m_i\geq 2$ for all $i$,
then there are unique integers $1\leq q<p$ with $(p,q)=1$ such that
$$\dfrac{p}{p-q}=[b_1,b_2\ldots, b_k].$$
In this case $W(\textbf{n},\textbf m)$ is just the minimal
symplectic filling $W_{p,q}(\textbf{n})$ of $L(p,q)$ given by Lisca.
Also note that if $\textbf m$ has precisely one component $m_j$
which is different from 0 with $m_j=\pm 1$ and $n_j=1$, then
W(\textbf{n},\textbf{m}) is a rational $4$-ball. To see this, note that
$H_1(W(\textbf{n},\textbf{m}),\bfq)$ and $H_2(W(\textbf{n},\textbf{m}),\bfq)$
are trivial precisely when the matrix describing the linking of the
attaching circles of the 2-handles with the dotted circles representing the 1-handles
is nondegenerate and it is easy to check that the latter holds when one imposes the above
conditions on $\textbf{m}$ and $\textbf{n}$.

By the algorithm in Section~\ref{lef}, the $4$-manifold
$W(\textbf{n},\textbf m)$ with boundary admits a genus-zero ALF
(\emph{achiral} Lefschetz fibration) over $D^2$. In other words, the
monodromy of the Lefschetz fibration will include left-handed Dehn
twists if $m_i <0$ for some $i$. In the following by the
\emph{monodromy factorization} of $W(\textbf{n}, \textbf{m})$ we
mean the monodromy factorization of this Lefschetz fibration over
$D^2$ (which may include some left-handed Dehn twists).
Moreover, by a \emph{cancelling pair of Dehn twists} we mean the
composition of a right-handed and a left-handed Dehn twist along two
parallel copies of some curve on a surface. Our proof of
Theorem~\ref{ration} is based on following preliminary result.

\begin{Lem} \label{intermed}
Given a pair of $k$-tuples $\textbf{n}=(n_1,\ldots,n_k), \textbf{m}=
(m_1,\ldots,m_k)\in \mathbb{Z}^k$, with
$\textbf{n}\in\mathcal{Z}_{k}$ and $s=\hgt(\textbf{n}) \geq 1$,
there exists a sequence of $k$-tuples $\textbf n_0,\ldots,
\textbf{n}_s\in\mathcal{Z}_{k}$ with $\textbf{n}_0 = \textbf{u}_k$
and $\textbf{n}_s=\textbf{n}$ such that, setting
$\textbf{m}_i=\textbf{n}+\textbf{m} - \textbf{n}_i$, the monodromy
factorization of $W(\textbf{n}_i,\textbf{m}_i)$ can be obtained from
the monodromy factorization of
$W(\textbf{n}_{i-1},\textbf{m}_{i-1})$ by a lantern substitution
together with, possibly, the introduction or removal of some
cancelling pairs of Dehn twists for $1\leq i\leq s$.
\end{Lem}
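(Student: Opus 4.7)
My plan is to proceed by induction on $s = \hgt(\textbf{n})$. The base case $s = 0$ forces $\textbf{n} = \textbf{u}_k$, since by the formula $\hgt(\textbf{n}) = |\textbf{n}| - 2(k-1)$ together with a short induction on a strict-blowup sequence from $(1,1)$, $\textbf{u}_k$ is the unique element of $\mathcal{Z}_k$ of height zero; the singleton sequence $\textbf{n}_0 = \textbf{u}_k$ then satisfies the conclusion vacuously.

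For the inductive step, assume the claim for heights less than $s$, and let $\textbf{n} \in \mathcal{Z}_k$ have $\hgt(\textbf{n}) = s \geq 1$. The combinatorial reduction I have in mind is the following. First, I would show that any $\textbf{n} \in \mathcal{Z}_k \setminus \{\textbf{u}_k\}$ contains a $1$ at some interior position $p$ (with $2 \leq p \leq k-1$); this follows by induction on a strict-blowup sequence from $(1,1)$ to $\textbf{n}$, since an interior-position blowup always creates an interior $1$ which is not subsequently eliminated. A strict blowdown at $p$ then produces $\widetilde{\textbf{n}} \in \mathcal{Z}_{k-1}$ with $\hgt(\widetilde{\textbf{n}}) = s-1$, and a strict blowup at the last position of $\widetilde{\textbf{n}}$---the height-preserving second case of Definition~\ref{blowup}---yields $\textbf{n}' \in \mathcal{Z}_k$ with $\hgt(\textbf{n}') = s-1$. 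Setting $\textbf{m}' := \textbf{n} + \textbf{m} - \textbf{n}'$ and invoking the inductive hypothesis on the pair $(\textbf{n}', \textbf{m}')$ supplies a sequence $\textbf{u}_k = \textbf{n}_0, \ldots, \textbf{n}_{s-1} := \textbf{n}'$; appending $\textbf{n}_s := \textbf{n}$ reduces the proof to verifying a single monodromy transition.

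For that verification, I would choose strict-blowup sequences realizing $\textbf{n}'$ and $\textbf{n}$ that both pass through the common $(k-1)$-tuple $\widetilde{\textbf{n}}$, so that the first $k-2$ stabilizing Dehn twists in the two open-book factorizations of $S^1 \times S^2$ agree; the discrepancies between the factorizations of $W(\textbf{n}', \textbf{m}')$ and $W(\textbf{n}, \textbf{m})$ are then confined to the last stabilizing twist together with the link-$\textbf{L}$ Dehn twists at the positions where $\textbf{m}'$ and $\textbf{m}$ disagree. By the bookkeeping identity $\textbf{n}_i + \textbf{m}_i = \textbf{n} + \textbf{m}$, the factorization of $W(\textbf{n}', \textbf{m}')$ has exactly one more Dehn twist than that of $W(\textbf{n}, \textbf{m})$. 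I would introduce cancelling pairs to normalize both factorizations, identify a four-holed sub-surface of the page $D_k$ on which four Dehn twists of the $W(\textbf{n}', \textbf{m}')$-factorization realize the boundary-parallel Dehn twists $d_1 d_2 d_3 d_4$, and apply the lantern relation $d_1 d_2 d_3 d_4 = abc$ of Section~\ref{lant} to obtain the corresponding three-twist sub-word of the factorization of $W(\textbf{n}, \textbf{m})$.

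The main obstacle will be the explicit diagrammatic identification in $D_k$: one must pinpoint the four-holed sub-surface, determine which stabilization curves ($x_i$, $\alpha_i$, $\beta_i$) and convex link curves ($\gamma_i$) play the roles of $d_1, d_2, d_3, d_4$ and of $a, b, c$, and confirm that the cancelling pairs introduced at the start of the step are exactly those removed at the end. This picture-based check, directly extending the explicit computation carried out in Subsection~\ref{ex} (where the total Dehn-twist count drops by one at each height-raising step precisely via a lantern substitution), is where the technical heart of the argument lies.
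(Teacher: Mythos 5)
There is a genuine gap in your inductive step, and it is located exactly where you place the ``technical heart.'' Your reduction appends the target $\textbf{n}=\psi_{p-1}(\widetilde{\textbf{n}})$ to a sequence ending at $\textbf{n}'=\psi_{k-1}(\widetilde{\textbf{n}})$ and asks the single transition $\textbf{n}'\to\textbf{n}$ to be one lantern substitution, on the grounds that the first $k-2$ stabilizing Dehn twists of the two open books agree. They do not. When the final blowup is at the interior position $p-1$, the new hole is inserted at position $p$ and, by the lifting rule of Section~\ref{openbook}, \emph{every} stabilizing curve of the open book for $\widetilde{\textbf{n}}$ that encircles the old $p$th hole is enlarged to include the new hole, whereas the last-position blowup leaves all stabilizing curves unchanged; so the two factorizations can disagree in far more than a lantern's worth of curves. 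Concretely, take $\textbf{n}=(2,3,1,2,3)\in\mathcal{Z}_{5}$ (height $3$), so that $\widetilde{\textbf{n}}=(2,2,1,3)$ and $\textbf{n}'=(2,2,1,4,1)$. The blowup sequences through $\widetilde{\textbf{n}}$ give stabilizing-curve multisets $\{2,3,4,5\},\{1,3,4,5\},\{1,2,5\},\{1,2,4\}$ for $\textbf{n}$ and $\{2,3,4\},\{1,3,4\},\{1,2,4\},\{5\}$ for $\textbf{n}'$ (each convex curve labelled by the holes it encircles); only $\{1,2,4\}$ is common. Adding the discrepancy $\textbf{m}'-\textbf{m}=\textbf{n}-\textbf{n}'=(0,1,0,-2,2)$ in the $\g$-twists, passing from the factorization of $W(\textbf{n}',\textbf{m}')$ to that of $W(\textbf{n},\textbf{m})$ requires deleting six twists and inserting five, all along pairwise non-isotopic curves. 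A single lantern substitution deletes four and inserts three, and introducing or removing cancelling pairs contributes nothing to this signed count, so your final transition is impossible here.

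The paper organizes the induction the other way around precisely to avoid this. It fixes the position $j$ of the \emph{last} blowup once and for all, applies the inductive hypothesis to the $(k-1)$-tuple $\textbf{n}'$ with $\textbf{n}=\psi_j(\textbf{n}')$ (so the induction descends in $k$ as well as in $s$), and sets $\textbf{n}_i=\psi_j(\textbf{n}'_{i-1})$. Then every term of the sequence after $\textbf{u}_k$ carries the \emph{same} hole-insertion, each factorization is the lift of the corresponding $D_{k-1}$ factorization followed by $\b_j\g_{j+1}^{\,\cdot}$, and each lantern substitution downstairs lifts to one upstairs; the only transition requiring an explicit computation is $\textbf{u}_k\to\psi_j(\textbf{u}_{k-1})$, which is the paper's $s=1$ base case via the lantern $\a_{j+1}\a_{j+2}\g_j\g_{j+2}=\d_j\b_j\g_{j+1}$. (Your base case $s=0$ is vacuous and merely postpones this computation.) If you want to retain a one-step-at-a-time induction in fixed dimension $k$, you would need consecutive tuples in your sequence to be blowups at the same position of a common predecessor; choosing the last position for $\textbf{n}'$ versus an interior position for $\textbf{n}$ is exactly what breaks this.
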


\begin{proof}
The proof will be by induction on $s$. Suppose that
$s=\hgt(\textbf{n})=1$. This means that $\textbf{n}=\psi_j(\textbf
u_{k-1})$ for some $1\leq j\leq k-2$, where
$\psi_j\colon\mathbb{Z}^{k-1}\to\mathbb{Z}^k$ denotes the strict
blowup at the $j$th term. Letting $\textbf m'=(m'_1,\ldots,m'_k)=
\textbf{n} + \textbf{m} - \textbf{u}_k$, we find that
\begin{equation*} m_i =
\begin{cases}
m'_i - 1 & \text{if $i=j$,} \\
m'_i + 1 & \text{if $i=j+1$,} \\
m'_i- 1 &  \text{if $i=j+2$,} \\
m'_i  & \text{otherwise}
\end{cases}
\end{equation*}
for any $\textbf{m}=(m_1,\ldots,m_k)\in\mathbb{Z}^k$. We compute the
monodromy factorizations $\phi$ and $\phi'$ of
$W(\textbf{n},\textbf{m})$ and $W(\textbf{u}_{k},\textbf{m}')$,
respectively. For this, consider a disk $D_k$ with $k$ holes ordered
linearly from left to right and label the boundary of the $i$th hole
$\a_i$, for $2\leq i\leq k$. Also, label the convex curve containing
the first $i$ holes $\g_i$, for $1\leq i\leq k$, and label the
convex curve containing the $(j+1)$st and the $(j+2)$nd holes
$\d_j$. Finally label the convex curve containing the first $j$
holes plus the $(j+2)$nd hole $\b_j$.  Here ``convex'' is used as in the sense
of Section~\ref{openbook}.
Following the algorithm given in the same section, we find that
$$ \phi'=\a_2\cdots\a_k \g^{m'_1}_1\cdots\g^{m'_k}_k$$
and
$$ \phi=\a_2\cdots\a_j\d_j\a_{j+3}\cdots\a_k\b_j
\g_1^{m'_1}\cdots \g_{j-1}^{m'_{j-1}} \g_{j}^{m'_j-1}
\g_{j+1}^{m'_{j+1}+1} \g_{j+2}^{m'_{j+2}-1}
\g_{j+3}^{m'_{j+3}}\cdots \g_k^{m'_k}. $$ We see that $\phi$ can
be obtained from $\phi'$ by the single lantern substitution
$$ \a_{j+1}\a_{j+2}\g_j\g_{j+2}=\d_j\b_j\g_{j+1}. $$
Note, however, that if either $m'_j \leq 0$ or $m'_{j+2} \leq 0$,
then we will need to introduce a cancelling pair of Dehn twists into
the monodromy factorization $\phi'$ before we can apply the lantern
substitution. Also, if $m'_{j+1} \leq -1$, then after applying the
lantern substitution we will remove a cancelling pair of Dehn twists
which appears in the monodromy. This finishes the proof for $s=1$ by
setting $\textbf{m}_0=\textbf{m}'$.

Now suppose that $t$ is a positive integer and it is known that for
every pair of $k$-tuples $\textbf{n}, \textbf{m}\in \mathbb{Z}^k$
with $\textbf{n}\in\mathcal{Z}_{k}$ and $s=\hgt(\textbf{n})\leq t$
there exists a sequence of $k$-tuples $\textbf n_0,\ldots,
\textbf{n}_s\in\mathcal{Z}_{k}$ with $\textbf{n}_0 = \textbf{u}_k$
and $\textbf{n}_s=\textbf{n}$ such that, setting
$\textbf{m}_i=\textbf{n}+\textbf{m} - \textbf{n}_i$, the monodromy
factorization of $W(\textbf{n}_i,\textbf{m}_i)$ can be obtained from
the monodromy factorization of
$W(\textbf{n}_{i-1},\textbf{m}_{i-1})$ by a lantern substitution
together with, possibly, the introduction or removal of some
cancelling pairs of Dehn twists for $1\leq i\leq s$. Let
$\textbf{n}, \textbf{m}\in \mathbb{Z}^k$ be a pair of $k$-tuples
with $\textbf{n}\in\mathcal{Z}_{k}$ and $s=\hgt(\textbf{n})=t+1$.
Then there is an $(k-1)$-tuple $\textbf{n}'\in\mathcal{Z}_{k-1}$
such that $\textbf{n}=\psi_j(\textbf{n}')$ and $\hgt(\textbf{n}')=
t$. Let $\rho_{j+1}\colon \bfz^k\to\bfz^{k-1}$ denote the map
$(l_1,\ldots,l_k)\mapsto (l_1,\ldots,\widehat{l}_{j+1},\ldots, l_k)$
given by omitting the $(j+1)$st entry. By the induction hypothesis,
there is a sequence of $(k-1)$-tuples
$\textbf{n}'_0,\ldots,\textbf{n}'_t\in\mathcal{Z}_{k-1}$ with
$\textbf{n}'_0=\textbf{u}_{k-1}$ and $\textbf{n}'_t=\textbf{n}'$
such that, setting $\textbf{m}'_i=\textbf{n}'+
\rho_{j+1}(\textbf{m}) -\textbf{n}'_i$, the monodromy factorization
of $W(\textbf{n}'_i,\textbf{m}'_i)$ can be obtained from the
monodromy factorization of $W(\textbf{n}'_{i-1},\textbf{m}'_{i-1})$
by a lantern substitution together with, possibly, the introduction
or removal of some cancelling pairs of Dehn twists for $1\leq i\leq
t$. Consider the sequence $\textbf{n}_i=\psi_j(\textbf{n}'_{i-1})$
for $1\leq i\leq s=t+1$ of $k$-tuples in $\mathcal{Z}_{k}$ obtained
by taking strict blowups at the $j$th term of the $(k-1)$-tuples in
the sequence $\textbf{n}'_0,\ldots, \textbf{n}'_t$ . Let
$\textbf{n}_0 = \textbf{u}_k$ and set
$\textbf{m}_i=\textbf{n}+\textbf{m}-\textbf{n}_i$ for $0\leq i\leq
s$. We claim that the monodromy factorization of
$W(\textbf{n}_i,\textbf{m}_i)$ can be obtained from the monodromy
factorization of $W(\textbf{n}_{i-1},\textbf{m}_{i-1})$ by a lantern
substitution together with, possibly, the introduction or removal of
some cancelling pairs of Dehn twists for $1\leq i\leq s$.

For $i=1$ the proof follows from above since $\hgt(\textbf{n}_1)=1$. Suppose that $i>1$.
Then the monodromy factorization $\phi'_{i-2}$ of $W(\textbf{n}'_{i-2},\textbf
m'_{i-2})$ has the form
$$ \phi'_{i-2} = c_1\cdots c_l, $$
where $c_r$ denotes a convex Dehn twists of $D_{k-1}$ for $1\leq
r\leq l$. It follows that the monodromy factorization $\phi_{i-1}$
of $W(\textbf{n}_{i-1}, \textbf{m}_{i-1})$ has the form
$$ \phi_{i-1} = \widetilde{c}_1\cdots \widetilde{c}_l\b_j\g_{j+1}^{m_{i-1,j+1}}, $$
where $\b_j$ and $\g_{j+1}$ are convex Dehn twist of $D_k$ as before
and $m_{i-1,j+1}$ denotes the $(j+1)$st component of
$\textbf{m}_{i-1}$. Here we have used the convention that if
$\sigma$ is a convex Dehn twist of $D_{k-1}$ around a collection of
holes $H$, then $\widetilde\sigma$ denotes the convex Dehn twist of
$D_k$ around the collection of holes $\widetilde{H}$ given by
$$ \widetilde{H} = \{r\,|\,1\leq r\leq j+1 \text{ and } r\in H\}\cup\{r+1\,|\,j+1\leq r\leq k-1
\text{ and } r\in H\}. $$
By the induction hypothesis, the monodromy factorization $\phi'_{i-1}$ of $W(\textbf{n}'_{i-1},
\textbf{m}'_{i-1})$ is obtained from the monodromy factorization $\phi'_{i-2}$
of $W(\textbf{n}'_{i-2},\textbf{m}'_{i-2})$ via a lantern relation of the form
$$ c_{i_1}c_{i_2}c_{i_3}c_{i_4}=c_{i_5}c_{i_6}c_{i_7}, $$
where, for each $r$, $c_{i_r}$ is a convex Dehn twist of $D_{k-1}$
which may or may not be included in the set of convex Dehn twists
$\{c_1,\ldots,c_l\}$, together with, possibly, the introduction or
removal of some cancelling pairs of Dehn twists. It follows easily
that the monodromy factorization $\phi_i$ of
$W(\textbf{n}_i,\textbf{m}_i)$ is obtained from the monodromy
factorization $\phi_{i-1}$ of $W(\textbf{n}_{i-1},\textbf m_{i-1})$
via a lantern relation of the form
$$ \widetilde{c}_{i_1}\widetilde{c}_{i_2}\widetilde{c}_{i_3}\widetilde{c}_{i_4}=
\widetilde{c}_{i_5}\widetilde{c}_{i_6}\widetilde{c}_{i_7}, $$
together with, possibly, the introduction or removal of some
cancelling pairs of Dehn twists, completing the proof of the
induction step and the lemma.
\end{proof}

\begin{proof}[Proof of Theorem~\ref{ration}]
Fix $1\leq q<p$ with $(p,q)=1$ and suppose that we are given a
minimal symplectic filling $W_{p,q}(\textbf{n})$ of $L(p,q)$, where
$\textbf{n}\in\mathcal{Z}_{k}(\frac{p}{p-q})$. Let
$\textbf{m}$ be the $k$-tuple of nonnegative integers corresponding
to the framed link $\textbf{L}$ so that $W_{p,q}(\textbf{n})=
W(\textbf{n},\textbf{m})$, and let $s=\hgt(\textbf{n})$. If $s=0$,
there is nothing to check. Suppose that $s\geq 1$ and consider the
sequence
\begin{equation} \label{blowdownseq}
  \textbf{n}=\textbf{n}^{0} \to \textbf{n}^{1} \to \cdots \to \textbf{n}^{s}
\end{equation}
given by taking the strict blowdown at the leftmost possible $1$. Here
$\textbf{n}^{i}\in\bfz^{k-i}$ for $0\leq i\leq s$. Observe that
$\textbf{n}^{s}=\textbf{u}_{k-s}$.
From the proof of Lemma~\ref{intermed}, there is an associated sequence
$\textbf{u}_{k}=\textbf{n}_0,\ldots,\textbf{n}_s=\textbf{n}$ such that, setting
$\textbf{m}_i=\textbf{n}+\textbf{m}-\textbf{n}_i$, the monodromy factorization of
$W(\textbf{n}_i,\textbf{m}_i)$ is obtained from the monodromy factorization of
$W(\textbf{n}_{i-1},\textbf{m}_{i-1})$ by a
lantern substitution together with, possibly, the introduction or
removal of some cancelling pairs of Dehn twists. Let $0=i_0<i_1<\cdots<i_r=s$
be the sequence of indices such that $\textbf m_i$ has all components nonnegative
if and only $i=i_j$ for some $j$. We claim that $W(\textbf{n}_{i_j},\textbf{m}_{i_j})$ is
obtained from $W(\textbf{n}_{i_{j-1}},\textbf{m}_{i_{j-1}})$ by a rational
blowdown for $1\leq j\leq r$. The proof is by induction on $r$.

Suppose that $r=1$, that is, $i_1=s$. We first show that $\textbf{n}=\textbf{n}_s$
contains exactly one component $n_j$ equal to $1$ with $1<j<k$. On the contrary, suppose
that $\textbf{n}$ contains at least two such components. Consider the strict blowdown
sequence in \eqref{blowdownseq} and let $\textbf{n}^{t}$ be the first tuple which has less
components equal
to $1$ than $\textbf{n}$. It follows from the assumption that $t<s$.
Let $\textbf{m}=\textbf{m}^{0},\ldots,\textbf{m}^{s}$
denote the associated sequence constructed as follows: if  $\textbf{n}^{i}$
is obtained from  $\textbf{n}^{i-1}$ by a strict blowdown at the $j$th term, let
$\textbf{m}^{i}=\rho_j(\textbf{m}^{i-1})$, where, as before, $\rho_j\colon\bfz^{k-i+1}\to\bfz^{k-i}$
is the map given by omitting the $j$th entry. For each pair $(\textbf{n}^i,
\textbf{m}^i)$, consider the sequence $(\textbf{n}_0^i=\textbf{u}_{k-i},\textbf{m}_0^i),
\ldots,(\textbf{n}_{s-i}^i=\textbf{n}^i,\textbf{m}_{s-i}^i=\textbf{m}^i)$ constructed
as in the proof of Lemma~\ref{intermed} from the portion of the blowdown sequence \eqref{blowdownseq}
beginning at $\textbf{n}^i$. Now consider the following diagram:
\begin{diagram}
(\textbf{n}=\textbf{n}_s^0,\textbf{m}=\textbf{m}_s^0) & \rDashto & (\textbf{n}^t=\textbf{n}_{s-t}^{t},\textbf{m}_{s-t}^{t}) & \rDashto & (\textbf{n}_0^{s}=\textbf{u}_{k-s},\textbf{m}_0^{s}) \\
\uDashto                                              &          &  \uDashto                                                &          & \\
(\textbf{n}_t^0,\textbf{m}_t^0)                       & \rDashto & (\textbf{n}_{0}^{t}=\textbf{u}_{k-t},\textbf{m}_{0}^{t}) &          & \\
\uDashto                                              &          &                                                          &          & \\
(\textbf{n}_{0}^0=\textbf{u}_{k},\textbf{m}_{0}^0)    &          &                                                          &          & \\
\end{diagram}

\noindent
Note that, by construction, every component of $\textbf{n}_{s-t}^t+\textbf{m}_{s-t}^t$ is
greater than $1$. (Here the condition that each strict blowdown is taken at the leftmost possible
$1$ is essential.) Hence every component of $\textbf{n}_{0}^t+\textbf{m}_{0}^t$ is greater
than $1$.
Since $\textbf{n}_{0}^t=\textbf{u}_{k-t}=(1,2,\ldots,2,1)$, it follows that every
component of $\textbf{m}_0^t$ is nonnegative. Now note that $\textbf{m}_{l+1}^{i-1}$ can be
obtained from $\textbf{m}_l^i$ as follows: suppose that $\textbf{n}_{s-i}^i$ is obtained from
$\textbf{n}_{s-i+1}^{i-1}$ by strictly blowing down at the $j$th term (and hence that
$\textbf{n}_{l}^i$ is obtained from $\textbf{n}_{l+1}^{i-1}$ also by strictly blowing down at
the $j$th term for $0\leq l\leq s-i$.), then
$\textbf{m}_{l+1}^{i-1}=\chi_j(\textbf{m}_l^i)$, where $\chi_j\colon\bfz^{k-i}\to\bfz^{k-i+1}$
is the map $(z_1,\ldots,z_{k-i})\mapsto (z_1,\ldots,z_{j-1},m_j,z_j,\ldots,z_{k-i})$
given by splicing into the $j$th position the $j$th component of $\textbf{m}$. It follows that
every component of $\textbf{m}_t^0$ is nonnegative contradicting the fact that $r=1$.
This proves that $\textbf{n}$ contains exactly one component $n_j$ equal to $1$.

We now proceed as follows: Given $\textbf{n}$, suppose that $n_j$ is the only component
that is equal to $1$, with $1<j<k$. Let $\textbf{m}'=(0,\ldots,0,1,0,\ldots,0)$, where the $1$ is
in the
$j$th position. Then $\textbf{m}'\leq\textbf{m}$, since every component of $\textbf{n}+\textbf{m}$,
except possibly the last,
is greater than $1$ and $\textbf{m}$ is nonnegative. Now note that $W(\textbf{u}_k,\textbf{m}'_0)$
can be rationally blown down to $W(\textbf{n},\textbf{m}')$, where
$\textbf{m}'_0=\textbf{n}+\textbf{m}'-\textbf{u}_k$ (since $W(\textbf{n},\textbf{m}')$ is a rational
$4$-ball).
By replacing the ``piece'' of the monodromy factorization of $W(\textbf{u}_k,\textbf{m}_0)$
that corresponds to the monodromy factorization of $W(\textbf{u}_k,\textbf{m}'_0)$,
where $\textbf{m}_0=\textbf{n}+\textbf{m}-\textbf{u}_k$,
by the monodromy factorization of $W(\textbf{n},\textbf{m}')$, we see that $W(\textbf{n},\textbf{m})$
is obtained from $W(\textbf{u}_k,\textbf{m}_0)$ by a rational blowdown.

Now assume that $l\geq 1$ and the claim is known to hold whenever $r\leq l$. Suppose that
$r=l+1$ and consider the following diagram:
\begin{diagram}
(\textbf{n}=\textbf{n}_{i_r}^0,\textbf{m}=\textbf{m}_{i_r}^0) & \rDashto & (\textbf{n}_{i_r-i_1}^{i_1},\textbf{m}_{i_r-i_1}^{i_1})        & \rDashto & (\textbf{n}_{0}^{i_r}=\textbf{u}_{k-i_r},\textbf{m}_{0}^{i_r}) \\
\uDashto                                                      &          &  \uDashto                                                      &          & \\
(\textbf{n}_{i_1}^0,\textbf{m}_{i_1}^0)                       & \rDashto & (\textbf{n}_{0}^{i_1}=\textbf{u}_{k-i_1},\textbf{m}_{0}^{i_1}) &          & \\
\uDashto                                                      &          &                                                                &          & \\
(\textbf{n}_{0}^0=\textbf{u}_{k},\textbf{m}_{0}^0)            &          &                                                                &          & \\
\end{diagram}

\noindent
By the previous step, we know that there is exactly one $j$ with $1<j<k$ such that the $j$th component
of $\textbf{n}_{i_1}^0$ is $1$. It follows that
$W(\textbf{n}_{i_1}^0,\textbf{m}_{i_1}^0)$ is obtained from $W(\textbf{n}_{0}^0,\textbf{m}_{0}^0)$
by a rational blowdown. Thus it is sufficient to show that $W(\textbf{n}_{i_r}^0,\textbf{m}_{i_r}^0)$
is obtained
from $W(\textbf{n}_{i_1}^0,\textbf{m}_{i_1}^0)$ by a sequence of rational blowdowns. For this, consider
the pair $(\textbf{n}_{i_r-i_1}^{i_1},\textbf{m}_{i_r-i_1}^{i_1})$. Since in the sequence
$\textbf{m}_0^{i_1},\textbf{m}_1^{i_1},\ldots,\textbf{m}_{s-i_1}^{i_1}$ the only tuples with
all components nonnegative are precisely the ones with subindices $0<i_2-i_1<\cdots<i_r-i_1=s-i_1$,
it follows
from the induction hypothesis that $W(\textbf{n}_{i_r-i_1}^{i_1},\textbf{m}_{i_r-i_1}^{i_1})$ is
obtained
from $W(\textbf{u}_{k-i_1},\textbf{m}_{0}^{i_1})$ by a sequence of rational blowdowns. Now, arguing
as before we find
that $W(\textbf{n}_{i_r}^0,\textbf{m}_{i_r}^0)$ is obtained from
$W(\textbf{n}_{i_1}^0,\textbf{m}_{i_1}^0)$
by a sequence of rational blowdowns completing the induction step and the proof of the theorem.
\end{proof}

The content of Corollary 5.2 and Theorem 6.1 in \cite{l} can be
recovered as a corollary:

{\Cor \label{ste} Any minimal symplectic filling of the canonical
contact structure on a lens space can be realized as a Stein
filling, .i.e. the underlying smooth $4$-manifold with boundary
admits a Stein structure whose induced contact structure on the
boundary agrees with the canonical one.}

\begin{proof} Any minimal symplectic  filling of the canonical contact
structure on a lens space admits a PALF over $D^2$ by
Theorem~\ref{pal} (also by \cite[Theorem1]{w}). This implies that
the underlying smooth $4$-manifold with boundary admits a Stein
structure whose induced contact structure on the boundary is
compatible with the open book induced from the PALF \cite{ao}. By
the proof of Theorem~\ref{ration}, the induced open book on the
boundary is fixed for all distinct PALFs constructed for a given
lens space. The desired result follows since we know that the
induced open book on the boundary of the canonical PALF on the
minimal resolution is compatible with the canonical contact
structure \cite{oz}.
\end{proof}

{\Cor The canonical contact structure on a lens space admits a
unique minimal symplectic filling---represented by the Stein
structure via the PALF we constructed on the minimal resolution---up
to symplectic rational blowdown and symplectic deformation
equivalence.}

\begin{proof}
This result follows from the combination of Theorem~\ref{ration},
Remark~\ref{sym}, Corollary~\ref{ste} and the fact that each
diffeomorphism type of a minimal symplectic filling of the canonical
contact structure on a lens space carries a unique symplectic
structure up to symplectic deformation equivalence which fills the
contact structure in question \cite{bs}.
\end{proof}

\subsection{An example}

We would like to describe how one can obtain the symplectic filling
$W_{(81,47)}((3,2,1,3,2))$ from the minimal resolution
 $W_{(81,47)}((1,2,2,2,1))$ by a single rational
 blowdown.

The monodromy of the the canonical PALF on
 $W_{(81,47)}((1,2,2,2,1))$, which is illustrated in  Figure~\ref{5holes-lant1}(a), can be expressed as $$\a_2\a_3\a_4a_5\g^2_1 \g_3
\g_4 \g^2_5$$ by our algorithm using the blowup sequence $$(1,1) \to
(1,2,1) \to (1,2,2,1) \to (1,2,2,2,1).$$

In the following we describe a sequence of lantern substitutions,
together with introduction or removal of some cancelling pairs of
Dehn twists, to obtain the PALF (see Figure~\ref{palf}) we
constructed on the symplectic filling $W_{(81,47)}((3,2,1,3,2))$
from
 the canonical PALF (see Figure~\ref{5holes-lant1}(a)) on the minimal resolution
 $W_{(81,47)}((1,2,2,2,1))$.

\begin{figure}[ht]
  \relabelbox \small {
  \centerline{\epsfbox{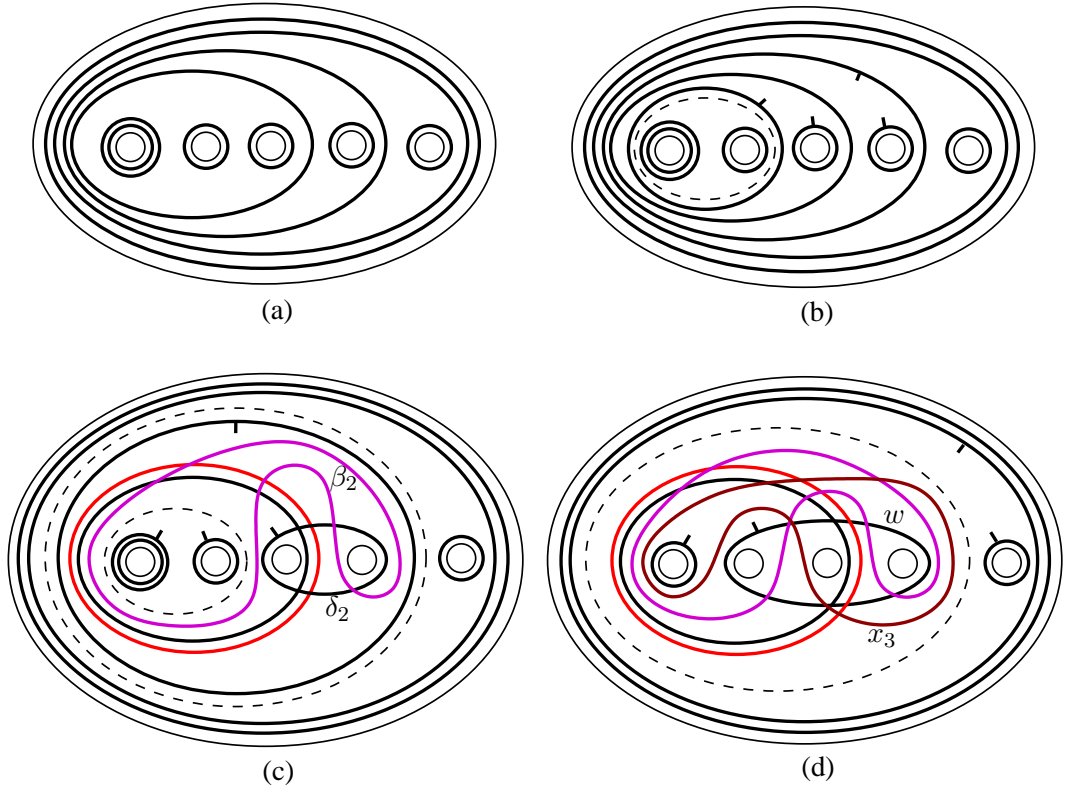}}}

\relabel{a}{(a)} \relabel{b}{(b)} \relabel{c}{(c)} \relabel{d}{(d)}

\relabel{x}{$w$} \relabel{y}{$x_3$}

\relabel{d2}{$\d_2$} \relabel{b2}{$\b_2$}

\endrelabelbox
       \caption{Thicker curves indicate right-handed Dehn twists on a $5$-holed disk, where left-handed Dehn
twists are drawn as dashed curves}
        \label{5holes-lant1}
\end{figure}

We first insert a cancelling pair of Dehn twists along two parallel
copies of a curve encircling the first two holes to obtain the ALF
in Figure~\ref{5holes-lant1}(b) with monodromy
$$\a_2\a_3\a_4a_5 \g^2_1 (\g^{-1}_2  \g_2)\g_3 \g_4 \g^2_5.$$

We apply a lantern substitution $\g_2\a_3\a_4\g_4= \d_2\b_2\g_3$ as
indicated in Figure~\ref{5holes-lant1}(b), to obtain the new ALF
depicted in Figure~\ref{5holes-lant1}(c) with monodromy
$$\a_2\a_5\g^2_1 \g^{-1}_2 (\g_4 \g^{-1}_4) \d_2\b_2\g_3 \g_3\g^2_5$$ where we
also inserted a pair of cancelling Dehn twists along two parallel
copies of a curve encircling the first four holes.

Next we apply a second lantern substitution $\g_1 \a_2 \d_2  \g_4=
\g_2wx_3$ indicated in Figure~\ref{5holes-lant1}(c), to obtain the
new ALF depicted in Figure~\ref{5holes-lant1}(d) with monodromy
$$\a_5\g^{-1}_4\g_1 (\g^{-1}_2 \g_2) wx_3 \b_2\g^2_3\g^2_5=\a_5
\g^{-1}_4\g_1 wx_3 \b_2\g^2_3\g^2_5$$  where we removed a pair of
cancelling Dehn twists encircling the first two holes. A final
lantern substitution $\g_1 w \a_5 \g_5 = \g_4x_1x_2$ is applied as
indicated in Figure~\ref{5holes-lant1}(d), together with the removal
of a pair of cancelling Dehn twists encircling the first four holes,
to obtain a PALF whose monodromy is $$(\g^{-1}_4\g_4) x_1 x_2 x_3
\b_2\g^2_3\g_5=  x_1 x_2 x_3 \b_2\g^2_3\g_5.$$

\noindent It is clear that this monodromy is equivalent to the
monodromy of the PALF on the symplectic filling
$W_{(81,47)}((3,2,1,3,2))$
 depicted in
Figure~\ref{palf}.

Using the notation in Lemma~\ref{intermed}, the above sequence of
three lantern substitutions can be expressed as
\begin{align*}
W_{(81,47)}((1,2,2,2,1)) &=
W((1,2,2,2,1), (2,0,0,1,1,2))\\
&\phantom{=}\to W((1,3,1,3,1), (2,-1,2,0,2))\\
&\phantom{=}\to W((2,2,1,4,1), (1,0,2,-1,2))\\
&\phantom{=}\to W((3,2,1,3,2), (0,0,2,0,1))
= W_{(81,47)}((3,2,1,3,2)).
\end{align*}

We show that the filling $W_{(81,47)}((3,2,1,3,2))$ is in fact
obtained from the minimal resolution $W_{(81,47)}((1,2,2,2,1))$ by a
single rational blowdown as follows: The monodromy of the PALF on
$W_{(81,47)}((3,2,1,3,2))$ can be obtained from the monodromy of the
PALF on $W_{(81,47)}((1,2,2,2,1))$ by a single monodromy
substitution (see Figure~\ref{open3}) as
$$\underline{\a_2\a_3\a_4a_5\g^2_1 \g_4 \g_5 } \g_3 \g_5 =
\underline{x_1x_2x_3 \b_2\g_3 }\g_3 \g_5, $$ which is the
combination of the three lantern substitutions together with the
introduction or removal of cancelling pairs of Dehn twists.

\begin{figure}[ht]
  \relabelbox \small {
  \centerline{\epsfbox{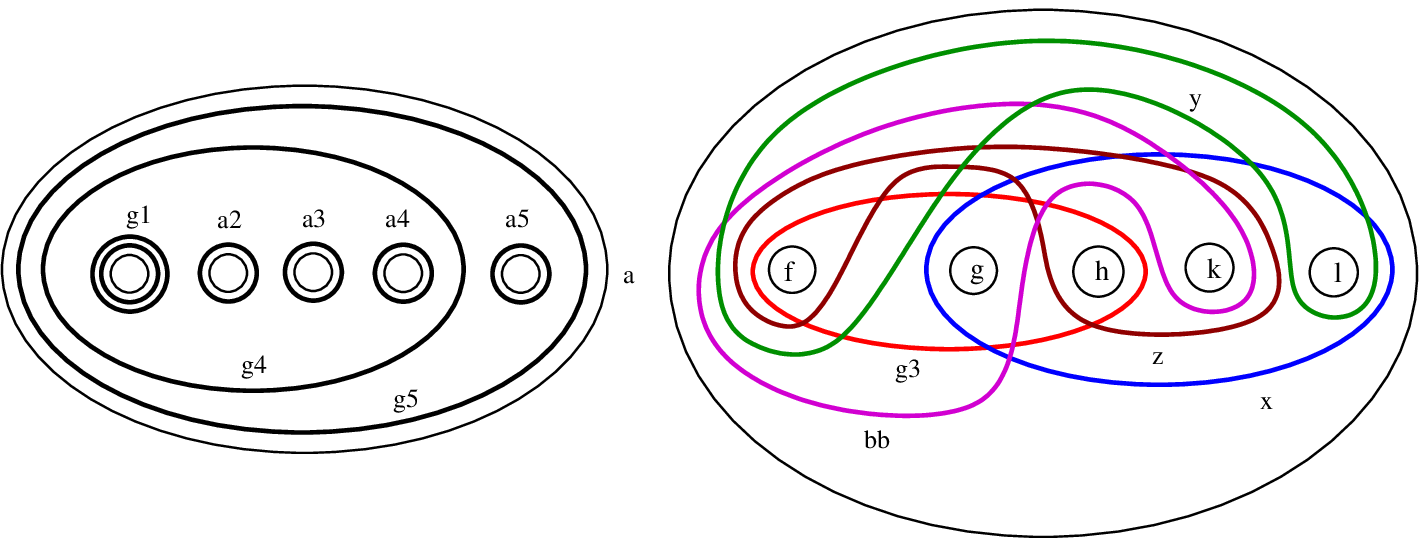}}}

\relabel{a}{$=$}

\relabel{y}{$x_2$} \relabel{z}{$x_3$} \relabel{x}{$x_1$}

\relabel{bb}{$\b_2$} \relabel{g3}{$\g_3$}

\relabel{g4}{$\g_4$} \relabel{g5}{$\g_5$}

\relabel{g1}{$\g_1$} \relabel{a2}{$\a_2$}

 \relabel{a3}{$\a_3$}   \relabel{a4}{$\a_4$}
 \relabel{a5}{$\a_5$}

\endrelabelbox
       \caption{A monodromy substitution: $\a_2 \a_3 \a_4\a_5\g^2_1 \g_4  \g_5  = x_1 x_2 x_3  \b_2\g_3.$}
        \label{open3}
\end{figure}

The PALF represented on the left-hand side in Figure~\ref{open3} is
diffeomorphic to the linear plumbing of disk bundles over $S^2$ with
Euler numbers $-2,-5,-3$, which can be directly checked by drawing
the handlebody diagram of this PALF and applying some handle slides
and cancellations. On the other hand, the PALF on the right-hand
side is a rational homology $4$-ball since the curves in the
monodromy spans the rational homology of the genus-zero fiber. We
conclude that this monodromy substitution corresponds to a rational
blowdown since
$$[-2,-5,-3]=-\dfrac{5^2}{5.3-1}.$$

\begin{Rem} When we run our algorithm for the two distinct minimal
symplectic fillings of $(L(p^2, p-1), \xi_{can})$, for any $p \geq
2$,  we obtain another proof of the daisy relation \cite{emvhm}. Our
method would yield many more interesting ``positive" relations in
the mapping class groups of planar surfaces.
\end{Rem}

We would like to finish with the following question: Does
Theorem~\ref{ration} hold true for minimal symplectic fillings of
any Milnor fillable contact $3$-manifold supported by  a
\emph{planar} open book?


\end{document}